\documentclass[12pt]{amsart}
\usepackage{graphicx} 
\usepackage{soul} 
\usepackage{amscd,amssymb,mathtools}
\usepackage[arrow,matrix,graph,frame,poly,arc,tips]{xy}
\usepackage[dvipsnames]{xcolor}
\usepackage{graphicx}
\usepackage{calrsfs}
\usepackage[labelfont=rm]{subcaption}
\usepackage{tikz-cd} 
\usepackage{tikz}
\usetikzlibrary{decorations.markings}
\usetikzlibrary{shapes}
\usetikzlibrary{backgrounds}
\usetikzlibrary{calc}
\usetikzlibrary{shapes.misc, positioning}
\usepackage{mdwlist}
\usepackage{xfrac}
\DeclareCaptionSubType*{figure}

\usepackage{multirow}
\usepackage{enumerate}
\usepackage{verbatim}
\usepackage{comment}
\usepackage{todonotes}

  \usepackage{hyperref}

    \newcommand\ba{\begin{align*}}
    \newcommand\ea{\end{align*}}
    \newcommand\be{\begin{enumerate}}
    \newcommand\ee{\end{enumerate}}
    \newcommand\bpf{\begin{proof}}
    \newcommand\epf{\end{proof}}
    \newcommand\bpp{\begin{prop}}
    \newcommand\epp{\end{prop}}
    \newcommand\bpb{\begin{prob}}
    \newcommand\epb{\end{prob}}
    \newcommand\bd{\begin{defn}}
    \newcommand\ed{\end{defn}}
    \newcommand\bh{\begin{hint}}
    \newcommand\eh{\end{hint}}

    \newcommand\Z{\mathbb{Z}}

\DeclareMathOperator\St{st}

    \newcommand\lk{\operatorname{Lk}}

    \newcommand\gam{\Gamma}

\def\thetitle{Right-angled Artin groups of large girth and finite volume hyperbolic $3$--manifold groups}

     \theoremstyle{plain}
    
    \newtheorem{thm}{Theorem}[section]
    
    \newtheorem{lemma}[thm]{Lemma}
    \newtheorem{cor}[thm]{Corollary}
    \newtheorem{prop}[thm]{Proposition}

    \newtheorem*{claim*}{Claim}

    \theoremstyle{remark}

    \theoremstyle{definition}
    \newtheorem{defn}[thm]{Definition}
    \newtheorem{prob}{Problem}[section]

\begin{document}
\title\thetitle
    \date{\today}

    \keywords{Right-angled Artin group, Bass--Serre theory, hyperbolic $3$--manifold}
    \subjclass[2020]{Primary:20F36,  57K32; Secondary: 20E08, 20F65}

    \author[T. Koberda]{Thomas Koberda}
    \address{Department of Mathematics, University of Virginia, Charlottesville, VA 22904-4137, USA}
    \email{thomas.koberda@gmail.com}
    \urladdr{https://sites.google.com/view/koberdat}

\begin{abstract}
   Let $\gam$ be a finite simplicial graph of girth at least five. In this short note, we give a proof that if $M$ is a finite volume hyperbolic $3$--manifold, then the right-angled Artin group $A(\gam)$ cannot contain $\pi_1(M)$ as a subgroup; the argument is elementary, modulo the resolution of the Virtual Fibering Conjecture and a splitting theorem due to Belegradek. In particular, if $C_n$ denotes the $n$--cycle then $A(C_n)$ cannot contain a finite volume hyperbolic $3$--manifold group for any $n\geq 3$, thus answering a question of A.~Reid.
\end{abstract}

 \maketitle

\section{Introduction}
Let $\gam$ be a finite simplicial graph, and write $A(\gam)$ for the right-angled Artin group; $A(\gam)$ is generated by the vertices of $\gam$, and two vertices commute precisely when they are adjacent in $\gam$; see~\cite{Charney,Koberda2013,Koberda2022}. The \emph{girth} of $\gam$ is the length of the shortest (full) cycle in $\gam$.

The resolution of Thurston's virtual fibering conjecture~\cite{Agol2008,Agol2013} shows, among other things, that if $M$ is a finite volume hyperbolic $3$--manifold then $G=\pi_1(M)$ admits a finite index subgroup $G_0\leq G$ and an injective homomorphism to some right-angled Artin group. An orthogonal question asks whether a particular right-angled Artin group can contain a finite volume hyperbolic $3$--manifold group, and in general this seems to be a difficult question; see~\cite{RushtonMO}. In fact, there does not seem to be a complete characterization of right-angled Artin groups which contain closed hyperbolic surface subgroups, which is the corresponding question in dimension two.

In this note, we will establish the following, which does not seem to have appeared in the literature:

\begin{thm}\label{thm:main}
    Let $\gam$ be a graph of girth at least five and let $M$ be a finite volume hyperbolic $3$--manifold. Then $A(\gam)$ does not contain a copy of $\pi_1(M)$ as a subgroup.
\end{thm}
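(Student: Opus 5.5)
Suppose, toward a contradiction, that $G=\pi_1(M)\leq A(\gam)$ with $\gam$ of girth at least five. The plan is to use the virtual fibering theorem to produce a subgroup of $G$ that is still a hyperbolic $3$--manifold group but fibers, to derive a structure incompatible with the girth hypothesis, and to use Belegradek's splitting theorem to pass to simpler pieces. We may pass freely to finite index subgroups of $G$, since these are again finite volume hyperbolic $3$--manifold groups sitting inside $A(\gam)$. By~\cite{Agol2013} we may assume $M$ fibers over the circle. Then $G\cong \pi_1(S)\rtimes_\phi\Z$, where $S$ is a (possibly punctured) hyperbolic surface and $\phi$ is pseudo-Anosov. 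In particular $G$ contains a nontrivial finitely generated normal subgroup $N=\pi_1(S)$ of infinite index, with infinite cyclic quotient.

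The key input about $A(\gam)$ is the following. When $\gam$ has girth at least five (in particular it is triangle-free and square-free), $A(\gam)$ splits nicely as a graph of groups. We remove a vertex $v$, or cut along a link, and write $A(\gam)$ as an amalgam or HNN extension over $A(\lk(v))$. Since $\gam$ is triangle-free, $\lk(v)$ is discrete, so the edge group is free. Since $\gam$ is square-free, distinct vertices share at most one neighbor, and this controls the intersections of conjugates of the edge groups.

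I expect Belegradek's theorem to say roughly this: a finitely generated group that has a nontrivial finitely generated normal subgroup of infinite index, and that acts on a tree with suitably controlled (e.g.\ free, or $\Z\times$free) edge stabilizers, must fix a vertex or have $N$ acting elliptically with restricted structure. Applying it to the action of $G$ on the Bass--Serre tree, we should conclude that either $G$ is conjugate into a vertex group, or $N$ lies in an edge group. In the second case $N=\pi_1(S)$ would embed in a free group, or in $\Z\times F$. For closed $S$ this is impossible. For punctured $S$ the contradiction must instead come from the conjugation action of the stable letter: a pseudo-Anosov monodromy cannot normalize a free subgroup contained in an edge group with the girth-controlled intersection pattern.

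In the first case we induct on the number of vertices of $\gam$. Vertex groups are $A(\gam\setminus v)$ or $A(\St(v))\cong \Z\times F$, and both satisfy the girth hypothesis or are visibly incapable of containing $G$. Indeed, $\Z\times F$ has infinitely generated or cyclic centralizers, while $G$ has no nontrivial center and is not virtually free. The base case is a graph with at most one vertex, or a forest, where $A(\gam)$ is a free product of $\Z$'s and $\Z^2$'s. There the subgroups are free products of free abelian groups and free groups, which cannot be one-ended hyperbolic $3$--manifold groups.

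The main obstacle is the normal-subgroup case with punctured fiber, where $N$ is free. Here we need the full strength of the girth-five hypothesis. To handle it, I would try to show that the $\Z^2$ cusp subgroups of $G$ must lie in conjugates of $\langle v,w\rangle$ for edges $vw$. Then, using the absence of squares and triangles, I would show that the normalizer structure forces $G$ to act on the tree with an invariant line or a fixed point, contradicting nonelementarity.
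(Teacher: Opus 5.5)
\textbf{There is a genuine gap: the central dichotomy is false and omits the case where all the work lies.}

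Your argument rests on a guessed version of Belegradek's theorem, and the guess is wrong. The theorem used here says that a finite volume hyperbolic $3$--manifold group admits no nontrivial splitting over an elementary (virtually abelian) subgroup. It says nothing about normal subgroups acting on trees.

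The dichotomy you draw from it is also false. For a finitely generated normal subgroup $N\trianglelefteq G$ acting on a $G$--tree, the correct alternatives are:
\begin{enumerate}
\item $N$ is elliptic, and then $\mathrm{Fix}(N)$ is $G$--invariant; or
\item $N$ has no fixed point, and then its minimal subtree is $G$--invariant with finite quotient by $N$.
\end{enumerate}
Your branches (``$G$ fixes a vertex, induct'' and ``$N$ lies in an edge group'') never cover the second alternative. Yet that alternative is the generic one: for a splitting of a fibered hyperbolic $3$--manifold group along an incompressible surface other than the fiber, the fiber group is never elliptic. It is exactly where the whole proof has to take place.

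Your ``$N$ in an edge group'' branch is also only asserted for punctured fibers; ``a pseudo-Anosov monodromy cannot normalize\dots'' is not an argument. The closed case should instead be dismissed at the outset, since $\operatorname{cd} A(\gam)\leq 2$ for triangle-free $\gam$.

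\textbf{The missing idea: let a cusp subgroup choose the splitting, then produce a splitting over $\Z$.}

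Let $\chi\colon G\to\Z$ be the fibration with kernel $N$, and $P\cong\Z^2$ a maximal cusp subgroup.
\begin{enumerate}
\item By Servatius' Centralizer Theorem and girth at least five, after conjugation $P\leq B=A(\St(v))=\langle v\rangle\times A(\lk(v))$ with $v^r\in P$ for some $r\neq 0$. Then $G\cap B=P$. Your suggestion that $P$ lies in a conjugate of some $\langle v,w\rangle$ is false: $\langle v,xy\rangle$ with $x\neq y$ in $\lk(v)$ is a counterexample.
\item Split $A(\gam)=A(\gam\setminus v)*_{A(\lk(v))}B$. Every edge at the vertex $b$ fixed by $B$ has stabilizer $A(\lk(v))$, which is normal in $B$ and contains no power of $v$. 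Hence $b$ is the unique fixed point of $v^r$.
\item Since $v^r$ normalizes $N$, this rules out $N$ elliptic: it would fix $b$, forcing $N\leq N\cap P$, which is cyclic. It also puts $b$ in the minimal $N$--subtree $T_0$.
\item Take an edge $e$ of $T_0$ at $b$ and $u\in P$ with $\chi(u)\neq 0$. Some $u^m$ sends $e$ to $fe$ with $f\in N\cap P$. So $h=f^{-1}u^m\in P$ fixes $e$ and $\chi(h)\neq0$.
\item The $N$--stabilizer of $e$ lies in $N\cap P$ and commutes with $h$ inside the free group $A(\lk(v))$. A nontrivial element of it would share a power with $h$, contradicting $\chi(h)\neq 0$; so it is trivial.
\item Let $H=\langle u^m,N\rangle$ and collapse all edges outside $N\cdot e$. $H$ acts on the resulting tree edge-transitively with infinite cyclic edge stabilizers. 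This is a nontrivial splitting over $\Z$, which Belegradek's theorem forbids.
\end{enumerate}

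\textbf{Your base case is also misstated.} For a tree $\gam$, $A(\gam)$ is generally not a free product of copies of $\Z$ and $\Z^2$; the path on three vertices gives $\Z\times F_2$. Such groups do contain non-hyperbolic $3$--manifold groups. With induction on vertices you only need the one-vertex case anyway.
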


Ruling out hyperbolic $3$--manifold subgroups of $\Z^3$ and $F_2\times F_2$ is straightforward, and so one obtains the following resolution of a question due to Alan Reid:

\begin{cor}\label{cor:cycle}
    Let $C_n$ denote an $n$--cycle. Then $A(C_n)$ does not contain the fundamental group of a finite volume hyperbolic $3$--manifold.
\end{cor}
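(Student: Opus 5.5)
The plan is to split according to $n$. For $n\geq 5$ the graph $C_n$ has girth exactly $n\geq 5$, so Theorem~\ref{thm:main} applies directly. It therefore remains to treat $n=3$ and $n=4$. In these cases $A(C_3)\cong\Z^3$ and $A(C_4)\cong F_2\times F_2$: the $4$--cycle is the join of two pairs of non-adjacent vertices, so its right-angled Artin group is the direct product of two free groups of rank two. Throughout, let $G=\pi_1(M)$ with $M$ a finite volume hyperbolic $3$--manifold. Then $G$ is a torsion-free, non-elementary lattice in $\mathrm{PSL}_2(\mathbb{C})$.

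For $\Z^3$ the argument is immediate. Every subgroup of $\Z^3$ is abelian, whereas $G$ is non-elementary and so contains a non-abelian free group. Hence $G$ does not embed in $\Z^3$.

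For $F_2\times F_2$, suppose $G\leq F_2\times F_2$ and set $K_1=G\cap(F_2\times 1)$ and $K_2=G\cap(1\times F_2)$.
\begin{enumerate}
\item Each $K_i$ is normal in $G$, being the kernel of the restriction to $G$ of a coordinate projection.
\item The groups $K_1$ and $K_2$ commute elementwise and intersect trivially.
\item If $K_1=1$, then the projection to the second factor is injective on $G$, so $G$ is free. This is impossible: $G$ has cohomological dimension at least $2$. Indeed, either $M$ is closed, in which case $G$ has cohomological dimension $3$, or $M$ has cusps, in which case $G$ contains $\Z^2$. Thus $K_1\neq 1$, and symmetrically $K_2\neq 1$.
\item Pick $1\neq k\in K_2$. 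In a torsion-free Kleinian group, the centralizer of a nontrivial element is elementary abelian, namely cyclic for a loxodromic element or at most $\Z^2$ for a parabolic element. Since $K_1$ centralizes $k$, the group $K_1$ is a nontrivial abelian subgroup of $G$.
\item Because $K_1$ is normal in $G$, its fixed point set on $\partial\mathbb{H}^3$, consisting of one or two points, is $G$--invariant. This forces $G$ to be elementary, a contradiction.
\end{enumerate}

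The only step needing care is the last one, the standard facts about centralizers and normal subgroups of non-elementary Kleinian groups. Everything else is formal once the reduction to Theorem~\ref{thm:main} for $n\geq 5$ is made.
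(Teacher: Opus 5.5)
Your proof is correct and follows the paper's route: the case $n\geq 5$ is Theorem~\ref{thm:main}, and for $n=3,4$ you give the same argument the paper sketches in Lemma~\ref{lem:short-cycle}, namely that abelian centralizers together with the absence of nontrivial abelian normal subgroups force one coordinate projection to be injective on $G$, so $G$ would be free. You have simply filled in the details the paper calls ``straightforward.''
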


A classical result of Droms--Servatius--Servatius implies that $A(C_n)$ contains closed hyperbolic surface groups whenever $n\geq 5$~\cite{SDS1989}, and so Corollary~\ref{cor:cycle} complements their result for higher dimensional manifolds.

Hyperbolicity is used in an essential way in Theorem~\ref{thm:main}, since certain right-angled Artin groups, the components of whose underlying graphs are allowed to be trees and triangles, are in fact (virtually) identified with fundamental groups of $3$--manifolds; see~\cite{Droms87,Gordon}.

\section{Reduction to connected graphs and ruling out short cycles}

From now on, a hyperbolic $3$--manifold will always mean a finite volume hyperbolic $3$--manifold. We will appeal to standard facts about $G=\pi_1(M)$ without comment; see~\cite{Thurston}, for example. We will use the fact
that $M$ is virtually fibered over the circle, by a theorem of Agol~\cite{Agol2013}:
\begin{thm}\label{thm:agol}
    The group $G$ admits a finite index subgroup $H\leq G$ and a surjective homomorphism $\chi\colon H\longrightarrow\Z$ such that $\ker\chi$ is finitely generated.
\end{thm}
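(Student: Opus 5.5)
This statement is Agol's virtual fibering theorem, which the paper cites rather than reproves. The natural plan is therefore to assemble it from the standard ingredients: the structure theory of hyperbolic $3$--manifold groups together with Thurston's fibering criterion.

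\textbf{Step 1: cubulate and virtually special.} Put $G=\pi_1(M)$. In the closed case, Kahn--Markovic supplies many quasi-Fuchsian surface subgroups, and Bergeron--Wise then give that $G$ acts properly and cocompactly on a CAT(0) cube complex. In the cusped case, the analogous geometric finiteness statement of Wise makes $G$ virtually compact special. In either case, Agol's theorem (hyperbolic cubulated groups are virtually special), or Wise's quasiconvex hierarchy in the cusped case, yields a finite index subgroup $G_1\le G$ that embeds in a right-angled Artin group. In particular $G_1$ is RFRS.

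\textbf{Step 2: fibering criterion.} Let $M_1$ be the corresponding finite cover. Agol's 2008 criterion says: if $\pi_1(M_1)$ is RFRS and $M_1$ is irreducible with $\chi(M_1)=0$, then some finite cover $M_2$ has a class $\phi\in H^1(M_2;\Z)$ in a fibered face of the Thurston norm ball.

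The idea behind the criterion is as follows.
\begin{enumerate}
\item Take a taut foliation, or a norm-minimizing surface sutured hierarchy, associated to a nonzero class.
\item Use the RFRS tower to pass to successive covers. In these covers the guts of the sutured manifold decomposition strictly decrease.
\item Eventually the guts vanish, which is the same as finding a fibered class.
\end{enumerate}
By Thurston and Stallings, a fibered class can be perturbed within its open cone to a primitive integral class $\chi\colon H=\pi_1(M_2)\to\Z$. This $\chi$ is surjective, and its kernel is the fundamental group of the fiber, a compact surface, so $\ker\chi$ is finitely generated.

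\textbf{Main obstacle.} The hardest part is Step 1, the virtual specialness. It rests on Kahn--Markovic surface subgroups, Sageev cubulation, and Agol's proof of the virtual Haken conjecture via the Wise malnormal special quotient theorem. None of this can reasonably be reproduced here. I would therefore treat it as a black box citation~\cite{Agol2013}, exactly as the paper does, and use only the conclusion: a finite index $H$ with a surjection to $\Z$ whose kernel is finitely generated.
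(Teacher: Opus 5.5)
Your proposal is correct and matches the paper: the paper gives no proof of this statement and simply cites Agol's virtual fibering theorem~\cite{Agol2013}, which is exactly where you end up. Your sketch of the ingredients is accurate background but is not needed for the paper's argument: virtual specialness via Kahn--Markovic, Bergeron--Wise and Agol (or Wise in the cusped case), then Agol's RFRS fibering criterion, then passing to a primitive integral class in a fibered cone whose kernel is the fiber group.
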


The kernel $F=\ker\chi$ in Theorem~\ref{thm:agol} is either a closed hyperbolic surface group when $M$ is closed, and a finitely generated free group otherwise.

Since we are interested in proving that hyperbolic $3$--manifold groups cannot lie inside of right-angled Artin groups on graphs of large girth, we may freely pass to finite index subgroups of a hyperbolic $3$--manifold group $G$, and hence we may assume without loss of generality that $G$ satisfies the conclusion of Theorem~\ref{thm:agol} already. For the same reason, we may assume that $M$ is orientable and has torus cusps.

It is convenient to assume that $\gam$ is a connected graph. It is trivial to see that if $\gam$ is disconnected then $\gam$ splits as the free product of the right-angled Artin groups on the components of $\gam$. By the Kurosh Subgroup Theorem, a subgroup of a free product is either conjugate into a free factor, is cyclic, or itself splits as a nontrivial free product. For hyperbolic $3$--manifolds, this is not possible; we have the following result due to Belegradek~\cite{Belegradek2002,Belegradek2007}:

\begin{thm}\label{lem:belegradek}
    A hyperbolic $3$--manifold group $G$ does not admit a nontrivial splitting over an elementary subgroup.
\end{thm}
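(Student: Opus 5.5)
The approach is to convert an abstract splitting of $G=\pi_1(M)$ into an essential surface in $M$, via the standard Stallings--Epstein--Culler--Shalen construction, and then use the geometry of finite volume hyperbolic manifolds to show that no essential surface can have elementary fundamental group. As the paper allows, we assume $M$ is orientable with torus cusps. We replace $M$ by its compact core $\overline{M}$, obtained by truncating the cusps. Then $\overline{M}$ is a compact, orientable, irreducible $3$--manifold whose boundary is a union of incompressible tori, and $\pi_1(\overline{M})=G$. Recall also that every elementary subgroup of $G$ is virtually abelian and torsion-free. It is therefore either trivial, infinite cyclic generated by a loxodromic or parabolic element, or a peripheral $\Z^2$ (a cusp group).

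Suppose $G$ acts on a Bass--Serre tree $T$ without a global fixed point, with edge stabilizers conjugate into an elementary subgroup $C$. The first step is to build a $G$--equivariant map from the universal cover of $\overline{M}$ to $T$. On each boundary torus we first arrange this map to be as simple as possible: the peripheral $\Z^2$ is elementary, and we homotope so that each boundary component either maps to a vertex or meets the preimage of edge midpoints in essential curves. We then make the map transverse to the midpoints of edges. The preimage descends to a properly embedded, two-sided surface $S\subset\overline{M}$. Next we perform the usual compressions and discard trivial components: sphere components are removed using irreducibility, and boundary compressions are handled similarly. This makes $S$ incompressible and boundary-incompressible, and it stays nonempty and essential because the action on $T$ has no global fixed point.

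By construction, the image of $\pi_1$ of each component $S_i$ lies in a conjugate of $C$, and $\pi_1(S_i)\to G$ is injective because $S_i$ is incompressible. Hence $\pi_1(S_i)$ is elementary. Since $S_i$ is orientable (it is two-sided in an orientable manifold), $S_i$ is a disk, an annulus, or a torus; the sphere has already been excluded.
\begin{enumerate}
\item A disk is impossible, because $\partial\overline{M}$ is incompressible.
\item An essential torus is impossible, because a finite volume hyperbolic manifold is atoroidal: every $\Z^2$ subgroup is peripheral, so the torus would be boundary-parallel and hence not essential.
\item An essential annulus is impossible, because hyperbolic manifolds are acylindrical. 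The boundary circles of the annulus are parabolic. The annulus then either conjugates a parabolic element of one cusp into a parabolic element of a different cusp (or of the same cusp by a non-peripheral element), which contradicts discreteness and the malnormality of cusp subgroups, or else it is boundary-parallel.
\end{enumerate}
This contradiction shows that no such splitting exists.

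The step I expect to be the main obstacle is the construction of the essential surface relative to the boundary. There are two points to control. First, the boundary tori must be made compatible with the tree, which uses that peripheral subgroups, being elementary, either fix a vertex or act through a cyclic quotient on a line. Second, we must verify that after the compressions some component of $S$ remains essential. For this I would follow the Culler--Shalen argument that a surface that is inessential in every component forces the action on $T$ to have a global fixed point, contradicting nontriviality of the splitting.
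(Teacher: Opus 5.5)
The paper gives no proof of this statement. It imports it from Belegradek, who proves nonsplitting in a broader setting (finite-volume manifolds of pinched negative curvature in every dimension $\geq 3$, and relatively hyperbolic analogues), where essential-surface methods are not available.

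Your proof is correct, and it is the classical $3$-dimensional route. The Stallings--Culler--Shalen construction turns a nontrivial action without inversions on a tree into a nonempty essential surface in the compact core $\overline{M}$, each of whose components has $\pi_1$ injecting into an edge stabilizer. The statement then reduces to four standard facts: $\overline{M}$ is irreducible, has incompressible boundary, is atoroidal, and contains no essential annuli. Your case analysis of spheres, disks, annuli and tori covers exactly these. This approach is self-contained for the $3$-dimensional case the paper actually needs, and it isolates precisely which geometric properties of $M$ are used. Belegradek's approach buys generality the paper never uses.

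Two minor remarks:
\begin{enumerate}
\item The boundary-compatibility step you flag as an obstacle is not really needed. The Culler--Shalen construction only requires sending lifts of those boundary tori whose $\pi_1$ fixes a vertex to that vertex; the compression and discarding procedure handles the rest.
\item Orientability is a non-issue in the paper, which assumes $M$ orientable throughout. If you want the statement for nonorientable $M$, justify passing to the orientation cover in one line. If a finite-index subgroup of $G$ fixed a vertex of $T$, then $G$ would have a finite orbit, hence (since there are no inversions) a global fixed vertex. So the restricted action is still nontrivial, and its edge stabilizers are still elementary.
\end{enumerate}
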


Here, an \emph{elementary subgroup} is one which has a finite orbit on the boundary of hyperbolic space, which in the case of hyperbolic $3$--manifolds implies that it must be virtually abelian.
In particular, $G$ cannot split as a nontrivial free product, nor (as will be important later) can it split as an amalgamated product or HNN extension with amalgamating subgroup isomorphic to $\Z$.

We may thus always assume, for the purposes of Theorem~\ref{thm:main}, that $\gam$ is connected. For the purposes of Corollary~\ref{cor:cycle}, we have:

\begin{lemma}\label{lem:short-cycle}
    A hyperbolic $3$--manifold group $G$ does not embed as a subgroup of $\Z^3=A(C_3)$, nor $F_2\times F_2=A(C_4)$.
\end{lemma}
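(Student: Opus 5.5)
The case of $\Z^3$ is immediate: every subgroup of $\Z^3$ is abelian. However, $G=\pi_1(M)$ is not abelian, since it contains a non-abelian free subgroup; for instance, two loxodromic elements with disjoint fixed points generate one after passing to powers, by ping-pong. So $G$ does not embed in $\Z^3$.

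For $F_2\times F_2$, suppose $G\leq F_2\times F_2$, and let $p_1,p_2$ be the two coordinate projections. The kernel of $p_1|_G$ is $G\cap(\{1\}\times F_2)$, which is a subgroup of a free group and hence free. Similarly, $\ker p_2|_G$ is free.

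First suppose both kernels are trivial. Then $G$ embeds in $F_2$ and so is free. This is impossible: $G$ has cohomological dimension at least $2$ (it contains $\Z^2$ in the cusped case and has $H^3\neq 0$ in the closed case), or more simply, it is not free by Theorem~\ref{lem:belegradek}.

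So we may assume $N=\ker p_1|_G$ is a nontrivial normal free subgroup of $G$. We then split into cases according to the rank of $N$.

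If $N\cong\Z$, then $G$ normalizes an infinite cyclic subgroup. Normalizers of loxodromic and parabolic cyclic subgroups in a Kleinian group are elementary, so $G$ would be elementary. This contradicts $G$ being a lattice.

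If $N$ is a non-abelian free group, then $N$ is a nontrivial normal subgroup of the lattice $G$. It is therefore Zariski dense, and its limit set is all of $S^2$. Consider the image $Q=p_1(G)\cong G/N$, which is a subgroup of $F_2$ and so is free. If $Q$ is trivial, then $G=N$ is free, which is a contradiction. If $Q$ has rank at least one, then $G$ maps onto $\Z$ with kernel $K\supseteq N$.

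The cleanest route I would try here is the following. Since $G\le F_2\times F_2$, the group $G$ has cohomological dimension at most $2$, so $M$ is not closed and $G$ is torsion-free. Then consider a cusp subgroup $P\cong\Z^2$. A $\Z^2$ in $F_2\times F_2$ must be of the form $\langle a\rangle\times\langle b\rangle$ up to finite index, with $a,b$ nontrivial. In particular, $P\cap N\neq 1$ and $P\cap \ker p_2\ne1$. So both kernels meet $P$ in a cyclic subgroup.

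Now $N$ is a finitely generated normal subgroup? Not necessarily. Instead, I would use the structure of normal subgroups of $G$: for a nontrivial normal subgroup $N$ and its centralizer, $\ker p_1|_G$ and $\ker p_2|_G$ commute elementwise. So $N$ centralizes the nontrivial normal subgroup $\ker p_2|_G$. But in a non-elementary Kleinian group, the centralizer of a non-elementary subgroup is trivial, and any nontrivial normal subgroup is non-elementary. Hence one of the kernels is trivial.

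Suppose then that $\ker p_2|_G=1$. Then $G$ embeds in $F_2$ via $p_2$ and is free, which is again a contradiction. This commuting-kernels argument is the key step, and it makes the case analysis above unnecessary. The main thing to check is the Kleinian group fact that a nontrivial normal subgroup of a lattice has trivial centralizer.
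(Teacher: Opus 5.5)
Your proof is correct and is essentially the paper's argument. The kernels $G\cap(F_2\times 1)$ and $G\cap(1\times F_2)$ are commuting normal subgroups of $G$, and the centralizer structure of $G$ forces one of them to be trivial, so $G$ embeds in $F_2$ and is free, a contradiction. (The paper phrases that input as ``nontrivial elements have abelian centralizers and there is no nontrivial abelian normal subgroup''; you phrase it as ``nontrivial normal subgroups are non-elementary and have trivial centralizer.'') The exploratory case analysis on the rank of $N$ and the cusp-subgroup paragraph are not needed and can be deleted.
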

\begin{proof}
    The first claim is trivial. For the second, each nontrivial element of $G$ has abelian centralizer, and $G$ itself has no abelian normal subgroup. It is straightforward to argue then that projecting $G\leq F_2\times F_2$ onto one of the two factors, it must be the case that the projection map is injective when restricted to $G$, whence $G$ is free or cyclic, a contradiction.
\end{proof}

\section{Right-angled Artin groups on graphs of large girth}

We now establish Theorem~\ref{thm:main}, with Corollary~\ref{cor:cycle} following from it and Lemma~\ref{lem:short-cycle}. We always assume that $\gam$ has girth at least five and is connected. Note that if $J\subseteq\gam$ is a (full, nontrivial) join subgraph of $\gam$ then $J$ is contained in the star $\St(v)$ of a vertex $v$ of $\gam$. This is a straightforward consequence of the girth assumption.

The following is an easy consequence of Servatius' Centralizer Theorem~\cite{Servatius1989} and the structure of stars in $\gam$.

\begin{lemma}\label{lem:vertex}
    Let $1\neq x\in A(\gam)$ have noncyclic centralizer. Then $x$ is conjugate into $A(\St(v))$ for some vertex $v$ of $\gam$. Moreover, if $\Z^2\cong H\leq A(\gam)$, then there exists a $g\in A(\gam)$ and a vertex $v$ of $\gam$ such that $g^{-1}Hg\cap\langle v\rangle\neq 1$.
\end{lemma}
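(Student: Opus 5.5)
The plan is to reduce both statements to Servatius' Centralizer Theorem together with the girth observation just made: every full join subgraph of $\gam$ lies in $\St(v)$ for some vertex $v$.

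\emph{First claim.} Let $1\neq x\in A(\gam)$. After conjugating, I assume $x$ is cyclically reduced. Let $\mathrm{supp}(x)$ be the set of vertices appearing in a reduced word for $x$. Decompose the full subgraph on $\mathrm{supp}(x)$ into its join components, so that $x=x_1^{e_1}\cdots x_k^{e_k}$, where the $x_i$ are pairwise commuting elements, not proper powers, each supported on one join factor. Servatius' Centralizer Theorem then gives
\[
C(x)=\langle x_1\rangle\times\cdots\times\langle x_k\rangle\times A(\lk(\mathrm{supp}(x))),
\]
where $\lk(\mathrm{supp}(x))$ is the set of vertices adjacent to every vertex of $\mathrm{supp}(x)$. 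Since $C(x)$ is noncyclic, either $k\geq 2$ or the link is nonempty.
\begin{enumerate}
\item If $k\geq 2$, then $\mathrm{supp}(x)$ is itself a nontrivial join.
\item If the link is nonempty, then $\mathrm{supp}(x)*\lk(\mathrm{supp}(x))$ is a nontrivial join.
\end{enumerate}
In either case, some join subgraph $J\supseteq\mathrm{supp}(x)$ exists. By the girth remark, $J\subseteq\St(v)$, so $x\in A(\St(v))$, and the original $x$ is conjugate into $A(\St(v))$. When $\mathrm{supp}(x)$ is a single vertex, the conclusion is immediate.

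\emph{Second claim.} Let $H=\langle a,b\rangle\cong\Z^2$. I conjugate so that $a$ is cyclically reduced; then $b\in C(a)$, which has the product form above. Because $\gam$ has girth $\geq 5$, I check which join structures can occur, using that $\St(v)$ is the cone $v*\lk(v)$ with $\lk(v)$ a discrete set (no triangles).

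If $a\in A(\St(v))$ with $v\in\mathrm{supp}(a)$ and $\mathrm{supp}(a)$ containing some other vertex, then $\mathrm{supp}(a)\subseteq\St(v)$. Its join decomposition can only be $\{v\}*(\text{link vertices})$, because link vertices are pairwise nonadjacent and at most one of them can be adjacent to anything else in $\St(v)$. Hence $a=v^m w$ with $w\in A(\lk(v)\cap\mathrm{supp}(a))$. The centralizer is then $\langle v\rangle\times\langle w_0\rangle$, times $A(\lk)$ of the support, and that last factor is trivial if $w$ involves at least two link vertices; a cycle $u_1 v u_2 y$ would give a $4$-cycle.

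So $C(a)$ is virtually $\Z^2$, it contains $\langle v\rangle$, and $H$ is a finite index subgroup of $C(a)$. Therefore $H\cap\langle v\rangle\neq1$.

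The remaining cases are handled as follows.
\begin{itemize}
\item If $\mathrm{supp}(a)=\{u\}$, then $b\in C(u)$, and $H\cap\langle u\rangle\ni a$ already.
\item If $\mathrm{supp}(a)\subseteq\lk(v)$ with $v\notin\mathrm{supp}(a)$ and $|\mathrm{supp}(a)|\geq 2$, then the link of $\mathrm{supp}(a)$ is exactly $\{v\}$ (girth). So $C(a)=\langle a_0\rangle\times\langle v\rangle\cong\Z^2$, and again a finite index argument gives $H\cap\langle v\rangle\neq1$.
\end{itemize}

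The main obstacle is this case bookkeeping in the second claim: one must show that for every cyclically reduced $a$ with noncyclic centralizer, $C(a)$ is either $\langle a_0\rangle\times A(\lk(u))$ for a single vertex $u$ (which contains $a$ itself in $\langle u\rangle$), or is virtually $\Z^2$ and contains a vertex generator. Two facts make the finite index step work: $H\leq C(a)\cong\Z^2$ is of finite index exactly when $H$ has rank $2$, and a rank-$2$ subgroup of $\Z^2$ meets every infinite cyclic subgroup nontrivially. Both are elementary.
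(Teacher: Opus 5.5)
Your argument is correct and follows the approach the paper indicates: the paper gives no proof beyond calling the lemma an easy consequence of Servatius' Centralizer Theorem and the girth-forced structure $\St(v)=v*\lk(v)$ with $\lk(v)$ discrete, and you have filled in those details. One small slip: when $v\in\mathrm{supp}(a)$, your $4$-cycle argument only shows $A(\lk(\mathrm{supp}(a)))$ is trivial when $w$ involves two link vertices, but a common neighbour of $v$ and even a single link vertex $u$ would span a triangle, so that factor is trivial in every subcase, including $\mathrm{supp}(a)=\{v,u\}$.
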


Note that if $\pi_1(M)=G\leq A(\gam)$ then $M$ cannot be closed. Indeed, otherwise $\pi_1(M)$ has cohomological dimension three, but the cohomological dimension of $A(\gam)$ is at most two.

We fix a maximal parabolic subgroup $P\leq G=\pi_1(M)$. This group $P$ is isomorphic to $\Z^2$, and is equal to its own centralizer in $G$. In light of Lemma~\ref{lem:vertex}, $P$ picks out (not necessarily canonically) a Bass--Serre splitting of $A(\gam)$. We spell this out in more detail.

Without loss of generality, the manifold $M$ fibers over the circle, $G=\pi_1(M)$ surjects to $\Z$ via a homomorphism $\chi$, and $F=\ker\chi$ is a finitely generated nonabelian free group. Since $P\cong\Z^2$, we have that $\chi$ is nontrivial when restricted to $P$, and $C=\ker\chi\cap P$ is a nontrivial cyclic group.

Identifying $P$ with its image in $A(\gam)$ and conjugating the whole of the image of $G$ if necessary, we have that $P$ is a subgroup of $B=A(\St(v))$ for some vertex $v$ of $\gam$,
and that $P\cap\langle v\rangle\neq 1$; this follows from Lemma~\ref{lem:vertex}. Write $v^r$ for the smallest nonzero power of $v$ that lies in $P$; it is a standard consequence of Servatius' Centralizer Theorem that $B$ is the full centralizer of $v^r$ in $A(\gam)$. By the maximality of $P$ in $G$ and the fact that $P$ is equal to its own centralizer in $G$, it follows that $P$ is precisely the centralizer of $v^r$ in $G$; in particular, $F\cap B=C\cong\Z$.

Deleting $v$ from $\gam$ now induces a splitting of $A(\gam)$. Write $\pi=A(\gam\setminus\{v\})$ and $\lambda=A(\lk(v))$ (where here $\lk(v)$ denotes the link of $v$), so that $A(\gam)\cong \pi*_{\lambda} B$. Note that $\lambda$ is free (possibly cyclic) and that $B\cong\lambda\times\Z$.
Write $T$ for the corresponding Bass--Serre tree for this splitting~\cite{SerreTrees,WiltonTrees}. Write $b$ for the distinguished vertex of $T$ corresponding to $B$.

Observe that since the stabilizer of any edge of $T$ incident to $b$ is a conjugate of $\lambda$ in $B$ and because $\lambda$ is normal in $B$ and contains no nonzero power of $v$, we have that no nonzero power of $v$ stabilizes an edge incident to $b$. If $v$ had any fixed vertices other than $b$ then $v$ would stabilize the corresponding geodesic segment in $T$, thus the edge incident to $b$ on that segment. Since the action of $A(\gam)$ on $T$ is without inversions, no nontrivial element can fix a point on an edge of $T$ without also fixing a vertex. Thus:

\begin{lemma}\label{lem:v-stab}
    The vertex $b$ of $T$ is the unique fixed point of any nonzero power of $v$.
\end{lemma}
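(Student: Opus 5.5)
The argument just given in the paragraph before the statement is essentially complete; I will organize it as a proof in three steps. Fix a nonzero integer $k$. \emph{Step one:} $v^k$ fixes $b$. Indeed, $v\in B=A(\St(v))$, and $B$ is the stabilizer of $b$ in the action of $A(\gam)\cong\pi*_\lambda B$ on $T$.

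\emph{Step two:} $v^k$ fixes no edge incident to $b$. The edges incident to $b$ correspond to the cosets $gλ$ with $g\in B$, and their stabilizers are the conjugates $g\lambda g^{-1}$. Since $B\cong\lambda\times\langle v\rangle$, the subgroup $\lambda$ is normal in $B$, so each of these stabilizers is $\lambda$ itself. Now $\lambda=A(\lk(v))$ is the kernel of the projection $B\to\langle v\rangle\cong\Z$. Hence $\lambda\cap\langle v\rangle=1$, and so $v^k\notin\lambda$.

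\emph{Step three:} $b$ is the only fixed point. Suppose $v^k$ fixes some point $p\neq b$ of $T$ (viewed as a metric tree). If $p$ lies in the interior of an edge, then, because the action is without inversions, $v^k$ fixes that whole edge and hence both of its endpoints. So we may assume $p$ is a vertex. The fixed point set of an isometry of a tree is convex, so $v^k$ fixes the geodesic $[b,p]$ pointwise. In particular it fixes the first edge of that geodesic, which is incident to $b$. This contradicts step two.

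None of these steps is a real obstacle. The only points needing care are the identification of the edge stabilizers at $b$ with $\lambda$, which uses normality, and the convexity of fixed sets, which is standard for tree actions.
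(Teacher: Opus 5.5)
Your proof is correct and follows the same argument the paper gives in the paragraph preceding the lemma. The edge stabilizers at $b$ all equal the normal subgroup $\lambda$, which meets $\langle v\rangle$ trivially. Any other fixed point would force $v^k$ to fix the first edge of the geodesic from $b$, and the absence of inversions handles points in edge interiors. You have only made explicit a few details the paper leaves implicit, such as that $v^k$ fixes $b$ and that fixed-point sets of tree automorphisms are convex.
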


Let $T_0$ denote the minimal $F$--invariant subtree. It is a standard fact in Bass--Serre theory that, because $F$ is finitely generated, such a tree exists and is unique~\cite{WiltonTrees}.

\begin{lemma}\label{lem:minimal subtree}
    The tree $T_0$ contains the vertex $b$, admits a nontrivial action by $F$, and has a finite quotient under the action of $F$.
\end{lemma}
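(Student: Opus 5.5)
We need to prove three things: $b\in T_0$, the action of $F$ on $T_0$ is nontrivial, and $F\backslash T_0$ is finite.

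\emph{Finiteness of the quotient.} This is the standard fact for a finitely generated group acting on a tree without inversions. We fix a vertex $x\in T$ and a finite generating set $s_1,\dots,s_k$ of $F$. Let $Y$ be the union of the geodesics $[x,s_ix]$. Then $FY$ is a connected $F$--invariant subtree, and its quotient is the finite image of $Y$. The minimal subtree $T_0$, when $F$ fixes no vertex, is the union of the axes of hyperbolic elements of $F$. It lies inside $FY$, so $F\backslash T_0$ is a subgraph of a finite graph and hence finite. When $F$ fixes a vertex, $T_0$ is that point; this case is excluded below.

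\emph{Nontriviality.} Suppose $F$ fixes a vertex $w$ of $T$. The element $c$ generating $C=F\cap P$ lies in $P\le B$, which fixes $b$. Since $P\cong\Z^2$ contains a nonzero power of $v$, the centralizer of $v^r$ in $G$ is $P$. I would use the fiber structure: $G=F\rtimes\langle t\rangle$, and we may choose $t\in P$, because $\chi|_P$ is nontrivial. Normality of $F$ then gives that $G$ preserves $\mathrm{Fix}(F)$, a nonempty subtree. Elements of $P$ therefore preserve $\mathrm{Fix}(F)$. The power $v^r$ fixes only $b$ by Lemma~\ref{lem:v-stab}, so it acts on the subtree $\mathrm{Fix}(F)$; it fixes a point of it or is hyperbolic on it. Since $v^r$ is elliptic in $T$, it has a fixed point in any invariant subtree, so $b\in\mathrm{Fix}(F)$, that is, $F\le B$. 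This contradicts $F\cap B=C\cong\Z$, since $F$ is nonabelian free. Hence $F$ acts without global fixed point, and $T_0$ is the union of axes.

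\emph{Containing $b$.} The same trick applies, and I expect this step to need the most care. $T_0$ is unique, hence invariant under the normalizer of $F$, which includes $G\supseteq P\ni v^r$. So $v^r$ preserves $T_0$. As an elliptic isometry preserving a subtree, $v^r$ fixes a point of $T_0$: take the projection of any fixed point onto $T_0$, which is fixed because projection to an invariant subtree is equivariant. By Lemma~\ref{lem:v-stab} that fixed point is $b$, so $b\in T_0$. The main subtlety is justifying invariance of $T_0$ under $G$, which follows from uniqueness of the minimal subtree once $F$ has no fixed point.
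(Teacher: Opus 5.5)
Your proposal is correct and follows the paper's proof essentially step for step. The key move is the same: the nearest-point projection of $b$ onto an $F$-normalizer-invariant subtree ($\mathrm{Fix}(F)$ in one case, $T_0$ in the other) is fixed by $v^r$, so it equals $b$ by Lemma~\ref{lem:v-stab}. This contradicts $F\cap B=C\cong\Z$ in the fixed-point case and places $b$ in $T_0$ otherwise, while the finite quotient comes from the finite generating set. The only blemish is your aside that one may take $t\in P$ with $G=F\rtimes\langle t\rangle$: nontriviality of $\chi|_P$ does not give $\chi(P)=\Z$. You never use it, though, since normality of $F$ in $G$ is all that is needed.
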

\begin{proof}
    Suppose first that $F$ is elliptic and so fixes a nonempty set of vertices $X$ of $T$. Since $v^r$ normalizes $F$, we have that $v^r$ fixes $X$. Taking the shortest geodesic from $b$ to $X$, we obtain a unique point $z\in X$ which must therefore be fixed by $v^r$. Lemma~\ref{lem:v-stab} implies $z=b$ and so $F$ stabilizes $b$. Since $F\cap B=C\cong \Z$, this violates the fact that $F$ is nonabelian.

    Thus, $T_0$ admits a nontrivial action of $F$. Again since $F$ is normalized by $v^r$, we can consider the shortest geodesic from $b$ to $T_0$ and conclude that $b$ is a vertex of $T_0$.

    That the quotient of $T_0$ by $F$ is finite is a standard result; choosing a finite generating set $S$ for $F$ and an arbitrary vertex $x$ in $T_0$, the subtree of $T_0$ spanned by the translates of $x$ by $S$ is compact and connected and contains a fundamental domain for an $F$--invariant tree, which must therefore contain $T_0$.
\end{proof}

We write $Y$ for the quotient of $T_0$ by $F$. Let $u\in P$ be an arbitrary element with $\chi(u)\neq 0$. Since $u$ normalizes $F$, it must preserve $T_0$ and induce an automorphism of $Y$. Choosing an arbitrary edge $e$ of $T_0$ that is incident to $b$, we have that some positive power of $u$ fixes the $F$--orbit of $e$. In particular, $u^m\cdot e=f\cdot e$ for some $f\in F$. Note that since $u\in P$, we have that $u^m\cdot e$ is incident to $b$. It follows then that $f\cdot b$ is also incident to $b$ and so $f$ stabilizes $b$; here we are using the vertex--type preservation of the action on the Bass--Serre tree. In particular, $f\in C$.

Writing $h=f^{-1}u^m$, we have that $h\in P$ stabilizes $e$. Since $f$ lies in the kernel of $\chi$, we have $\chi(h)\neq 0$ and so $h\notin F$. We obtain the following:

\begin{lemma}\label{lem:edge-stab}
    The stabilizer of $e$ in $F$ is trivial.
\end{lemma}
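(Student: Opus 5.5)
Since $e$ is incident to $b$, its stabilizer in $A(\gam)$ lies in $B=A(\St(v))$. Hence $\mathrm{Stab}_F(e)\leq F\cap B=C$, where $C=\ker\chi\cap P$ is infinite cyclic. It therefore suffices to show that no nontrivial element of $C$ fixes $e$.

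We use the element $h=f^{-1}u^m\in P$ just constructed, which fixes $e$ and satisfies $\chi(h)\neq 0$. Suppose, for contradiction, that some $1\neq c\in C$ fixes $e$. Then $c$ and $h$ both lie in $P\cong\Z^2$ and both lie in $\mathrm{Stab}(e)$. The edge stabilizer is a conjugate $g\lambda g^{-1}$ of $\lambda=A(\lk(v))$ with $g\in B$. Since $\lambda$ is normal in $B$, this stabilizer is just $\lambda$, which is free. Now $c\in\ker\chi$ and $\chi(h)\neq 0$, so $c$ and $h$ are independent in $P$. Thus $\langle c,h\rangle$ has finite index in $P$ and is isomorphic to $\Z^2$. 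But $\langle c,h\rangle\leq\lambda$, and a free group contains no $\Z^2$. This contradiction shows that no nontrivial element of $C$ fixes $e$.

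It remains to check a few details. First, why is $\mathrm{Stab}(e)$ exactly $\lambda$ rather than merely a conjugate of it? The edges at $b$ correspond to cosets $B/\lambda$, and their stabilizers are $\beta\lambda\beta^{-1}=\lambda$ for $\beta\in B$. Second, one must check that $\chi(h)\neq 0$, which holds because $f\in F=\ker\chi$ and $\chi(u^m)=m\chi(u)\neq 0$. An alternative route to the same conclusion uses that $v^r\in P$ and that no nonzero power of $v$ lies in $\lambda$. However, the free-group argument is cleaner, since it needs no control over where $v^r$ sits relative to $c$ and $h$. The only step I expect to need care is confirming that the stabilizer of every edge incident to $b$ is literally $\lambda$. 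This follows from the normality remark already made before Lemma~\ref{lem:v-stab}, so the argument is essentially routine.
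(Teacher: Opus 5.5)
Your proposal is correct and follows essentially the same argument as the paper. Both reduce to $\mathrm{Stab}_F(e)\le C$, place a putative nontrivial $c\in C$ together with $h$ in the free edge stabilizer $\lambda$, and derive a contradiction from $\chi(c)=0\neq\chi(h)$. The paper phrases the final step as ``commuting elements of a free group share a common power'' rather than ``$\langle c,h\rangle\cong\Z^2$ cannot lie in a free group,'' which is the same observation.
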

\begin{proof}
    Write $F_0$ for the stabilizer of $e$. Since $F_0$ stabilizes $b$, we must have $F_0\leq P$ and so $F_0\leq C$.
    
    The element $h$ and $F_0$ together lie in $P$, so $h$ must commute with $F_0$. By the observation preceding Lemma~\ref{lem:v-stab}, since both $F_0$ and $h$ stabilize $e$ and since $e$ is incident to $b$, both $F_0$ and $h$ therefore lie in the subgroup $\lambda$, which is a free group. Nontrivial abelian subgroups of free groups are cyclic, and so any nontrivial element of $F_0$ must share a common power with $h$. Since $F_0$ is in the kernel of $\chi$ and $h$ is not, this is a contradiction.
\end{proof}

Now, write $H=\langle u^m,F\rangle$. We have that $H$ has finite index in $G$ and is therefore a finite volume hyperbolic $3$--manifold group. Moreover, $H/F\cong\Z$.

Write $E$ for the full $F$--orbit of $e$. Since $u^m\cdot e=f\cdot e$ from before and because $F$ is normal in $H$, we have that $H$ leaves $E$ invariant.

Consider the forest $\mathcal{F}$ spanned by the edges that do not lie in $E$. Clearly $\mathcal F$ is also $H$--invariant. Collapsing each component of $\mathcal F$ to a vertex, we obtain an $H$--equivariant homotopy equivalence of graphs $T_0\longrightarrow \tau$, so that $\tau$ is also a tree. Note that the edges of $\tau$ are in canonical bijection with the elements of $E$, and that $F$ admits a nontrivial action on $\tau$. Moreover, $\tau$ is $F$--minimal, since any proper $F$--invariant sub-tree would pull back to a proper $F$--invariant subtree of $T_0$; finally, it is clear that $F$ acts on $\tau$ without inversions.

\begin{proof}[Proof of Theorem~\ref{thm:main}]
    By Belegradek's Theorem (see Theorem~\ref{lem:belegradek}) and because $H$ acts transitively on the edges of $\tau$, it suffices to show that the $H$--stabilizer of an edge of $\tau$ is a cyclic group, since this would furnish a splitting of a hyperbolic manifold group over $\Z$.

    Let $e\in E$ be identified with an edge of $\tau$. The stabilizer $H_0\leq H$ of $e$ agrees with the $H$--stabilizer of $e$ viewed as an edge of $T_0$. Since no nontrivial element of $F$ stabilizes $e$ by Lemma~\ref{lem:edge-stab}, it follows that $H_0$ is mapped injectively into $\Z$ by $\chi$. Since the element $h$ does not lie in the kernel of $\chi$ and stabilizes $e$, we obtain that $H_0\cong\Z$, as required.
\end{proof}

\section*{AI disclosure}
The author discussed Reid's question with ChatGPT, and the idea to use Bass--Serre theory to find an elementary splitting of a hyperbolic $3$--manifold subgroup of $A(C_5)$ and to use Belegradek's splitting theorem arose from this discussion. The arguments and writing in this note are the author's.

\section*{Acknowledgements}
The author thanks S.-h.~Kim for helpful discussions. The author is partially
supported by NSF grant DMS-2349814.

\end{document}